\documentclass[12pt,leqno]{amsart}\usepackage[latin9]{inputenc}\usepackage{verbatim,amsthm,amstext,amssymb,color,amscd,bm}\makeatletter
\usepackage{hyperref}\hypersetup{colorlinks=true,citecolor=black,linkcolor=black}\usepackage[shortlabels]{enumitem}\numberwithin{equation}{section}
\setlength{\textheight}{23cm}\setlength{\textwidth}{16cm}\setlength{\oddsidemargin}{0cm}\setlength{\evensidemargin}{0cm}\setlength{\topmargin}{0cm}
\theoremstyle{plain}\newtheorem{theorem}{Theorem}[section]\newtheorem{lemma}[theorem]{Lemma}
\newtheorem{conjecture}[theorem]{Conjecture}\theoremstyle{definition}\newtheorem{remark}[theorem]{Remark}\makeatother\begin{document}

\title[Infinite topological entropy and positive mean dimension]{Infinite topological entropy, positive mean dimension, and factors of subshifts}
\author[Lei Jin]{Lei Jin}
\address{Lei Jin: School of Mathematics, Sun Yat-sen University, Guangzhou, China}
\email{jinleim@mail.ustc.edu.cn}
\author[Yixiao Qiao]{Yixiao Qiao}
\address{Yixiao Qiao (Corresponding author): School of Mathematics and Statistics, HNP-LAMA, Central South University, Changsha, China}
\email{yxqiao@mail.ustc.edu.cn}
\keywords{Infinite topological entropy; Positive mean dimension; Factor; Subshift of block type; Hilbert cube.}
\thanks{L. Jin was supported by NNSF of China (Grant No. 12201653). Y. Qiao was supported by NNSF of China (Grant No. 12371190).}
\begin{abstract}
We study dynamical systems with the property that all the nontrivial factors have infinite topological entropy (or, positive mean dimension). We establish an ``if and only if'' condition for this property among a typical class of dynamical systems, the subshifts of block type in the Hilbert cube. This in particular leads to a large class of concrete (and new) examples of dynamical systems having this property.
\end{abstract}
\maketitle

\section{Introduction}
Let $X$ be a compact metrizable space and $T:X\to X$ a homeomorphism. We call the pair $(X,T)$ a (topological) \textbf{dynamical system}. A dynamical system $(Y,S)$ is called a \textbf{factor} of the system $(X,T)$ if there is a continuous and surjective map $\pi:X\to Y$ (called a \textbf{factor map}) satisfying that $\pi\circ T=S\circ\pi$.

Lindenstrauss \cite{Lin95} proved that\footnote{Actually what was proven there is much stronger than the statement here. But we use this for simplicity, as they have the same spirit. Notice that so will we do with \cite{Lin99}.} any finite-dimensional dynamical system has a nontrivial factor with finite topological entropy. (Here a dynamical system $(X,T)$ is \textbf{nontrivial} if the space $X$ contains at least two points, and is said to be finite-dimensional if the topological dimension, i.e. the Lebesgue covering dimension, of $X$ is finite.) However, for infinite-dimensional systems the case becomes quite different and is much more involved. On the one hand, Lindenstrauss \cite{Lin99} studied a significant class\footnote{This, being a subclass of systems with zero mean dimension, contains all the minimal systems (in the subclass). Recall that a system $(X,T)$ is said to be \textbf{minimal} if $X$ is the only $T$-invariant, closed, and nonempty subset of $X$. Indeed, the argument there also applies to those with the \textit{marker property}, encompassing systems admitting infinite minimal factors.} of infinite-dimensional systems behaving well in this property (i.e., any among them has a nontrivial finite entropy factor). On the other hand, Lindenstrauss \cite{Lin95} also showed to the contrary that there do exist some (infinite-dimensional) systems (even minimal) for which this property fails.

For example, we consider the dynamical system $([0,1]^\mathbb{Z},\sigma)$, called \textit{the full shift over the alphabet} $[0,1]$. Recall that the \textbf{full shift} $\sigma$ on the product space $[0,1]^\mathbb{Z}$ is defined by $\sigma:[0,1]^\mathbb{Z}\to[0,1]^\mathbb{Z}$, $(x_n)_{n\in\mathbb{Z}}\mapsto(x_{n+1})_{n\in\mathbb{Z}}$. It is infinite-dimensional, and has infinite topological entropy, but its mean dimension is finite (equal to $1$ \cite{LW00}). Lindenstrauss \cite{Lin95} proved that this system has no nontrivial finite entropy factors. In fact, as shown in \cite{LW00} by Lindenstrauss and Weiss, $([0,1]^\mathbb{Z},\sigma)$ satisfies the property even stronger: having no nontrivial factors with zero mean dimension. Later, Li \cite{Li13} substantially extended the result \cite{LW00} to all sofic groups for all the path-connected alphabets.

Note that in the two conditions stated above, the former is equivalent to saying that every nontrivial factor has infinite topological entropy, while the latter, positive mean dimension. Thus it is tempting to consider systems with all the nontrivial factors having infinite mean dimension. But this simply turns out to be not proper, because it is easy to show that any system has a nontrivial factor (dynamically) isomorphic to a subsystem of $([0,1]^\mathbb{Z},\sigma)$ \cite[Proposition 1.2]{Lin95}, which then implies having finite mean dimension \cite{LW00}. So the correct family for further consideration, as an alternative, should be the systems with the previously-mentioned property, namely, with all the nontrivial factors having positive mean dimension.

Recently, Garcia-Ramos and Gutman \cite{GG24} revisited this notion and provided an abstract approach to viewing it. Although those examples given in the earlier work are only of two types: the full shifts and certain minimal systems being complicatedly constructed, they still systematically investigated the family\footnote{These systems are said there to have \textit{completely positive mean dimension}.} of dynamical systems with this property. Through some local analysis along the technical detail of the proofs given in \cite{LW00,Li13}, Garcia-Ramos and Gutman \cite{GG24} showed that, among other things, this family in some sense is considerably rich.

To a certain extent, we also intend to study the dynamical systems with the property that all the nontrivial factors have positive mean dimension (and thus, have infinite topological entropy). However, we do not plan to go within this framework. As mentioned before, there are few kinds of examples (up to the authors' knowledge) that are known to satisfy this property, but the existence of such systems proved to be rather rich. These two aspects hence give rise to a seemingly unsatisfying impression that stimulates us to enrich the concrete ones. In this paper, we wish it to become clarified. More specifically, our aim is to develop an ``if and only if'' condition for this property among a typical class of dynamical systems, called \textit{the subshifts of block type in the Hilbert cube}. As we can see in a moment, this will lead in particular to a large class of concrete (and new) examples of dynamical systems having this property.

To state our main theorem precisely, we recall some necessary terminology\footnote{For the definition of \textit{mean dimension} introduced by Gromov--Lindenstrauss--Weiss as a new topological invariant (other than the classical \textit{topological entropy}) of dynamical systems, we refer to \cite{LW00}.}. Let $K$ be a compact metrizable space. Let $q$ be a positive integer. Let $B$ be a closed (and nonempty) subset of the product space $K^q$. Define a (closed) subset $X_0$ of the product space $K^\mathbb{Z}$ by $$X_0=\{(x_i)_{i\in\mathbb{Z}}\in K^\mathbb{Z}:(x_i)_{kq\le i\le kq+q-1}\in B,\,\;\forall k\in\mathbb{Z}\}.$$ We consider $(K^\mathbb{Z},\sigma)$, the full shift over the alphabet $K$. Now define the (closed and $\sigma$-invariant) subset $X\subset K^\mathbb{Z}$ as the one consisting of all the points $p$ in $K^\mathbb{Z}$ satisfying that there is some $t\in\mathbb{Z}$ such that $\sigma^t(p)\in X_0$. Clearly, if we consider the restriction of $\sigma$ to $X$ (which is still denoted by $\sigma$), then $(X,\sigma)$ comes to be a subsystem of $(K^\mathbb{Z},\sigma)$. Following Coornaert \cite[Subsection 7.5]{Coo15} we say that $(X,\sigma)$ is \textbf{the subshift of block type of $(K^\mathbb{Z},\sigma)$ associated with the pair $(q,B)$}. The subshifts of block type form a large class of dynamical systems, which are widely used (frequently being a practical and key tool and sometimes with possible refinements) to construct systems for various purposes. These systems and their factors are mostly of high complexity, which can be also reflected in our main result as follows.

\begin{theorem}[Main theorem]\label{main1}
Let $B$ be a path-connected and closed subset of $[0,1]^2$. Let $(X,\sigma)$ be the subshift of block type of $([0,1]^\mathbb{Z},\sigma)$ associated with $(2,B)$. Then the following three conditions are equivalent:
\begin{itemize}
\item[\textup{(i)}] every nontrivial factor of $(X,\sigma)$ has infinite topological entropy;
\item[\textup{(ii)}] every nontrivial factor of $(X,\sigma)$ has positive mean dimension;
\item[\textup{(iii)}] the set $B$ and the diagonal of $[0,1]^2$ intersect, i.e., $B\cap\{(x,x):0\le x\le1\}\ne\emptyset$.
\end{itemize}
\end{theorem}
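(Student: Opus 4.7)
The plan is to prove (ii)$\Rightarrow$(i), (iii)$\Rightarrow$(ii), and the contrapositive of (i)$\Rightarrow$(iii). The implication (ii)$\Rightarrow$(i) is the standard Lindenstrauss--Weiss relation between mean dimension and topological entropy \cite{LW00}, so it requires no further work. Throughout I will use the elementary observation that $\sigma^{2}(X_{0})=X_{0}$, and consequently $X=X_{0}\cup\sigma(X_{0})$ with both summands closed.

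The heart of the argument is (iii)$\Rightarrow$(ii). The structural input I will exploit is that the natural ``blocking'' bijection
\[
(x_{i})_{i\in\mathbb{Z}}\longmapsto\bigl((x_{2k},x_{2k+1})\bigr)_{k\in\mathbb{Z}}
\]
topologically conjugates $(X_{0},\sigma^{2})$ with the full shift $(B^{\mathbb{Z}},\sigma)$ over the path-connected alphabet $B$; by Li's theorem \cite{Li13}, every nontrivial factor of the latter has positive mean dimension. Given a factor map $\pi:(X,\sigma)\to(Y,S)$ with $Y$ nontrivial, I will restrict $\pi$ to $X_{0}$ and consider the factor map $\pi|_{X_{0}}:(X_{0},\sigma^{2})\to(\pi(X_{0}),S^{2})$, splitting into cases. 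If $\pi(X_{0})$ contains at least two points, Li's theorem yields $\mathrm{mdim}(\pi(X_{0}),S^{2})>0$, and using that $\pi(X_{0})$ is a closed $S^{2}$-invariant subset of $Y$, the inequality $\mathrm{mdim}(Y,S^{2})\ge\mathrm{mdim}(\pi(X_{0}),S^{2})$ together with $\mathrm{mdim}(Y,S^{2})=2\,\mathrm{mdim}(Y,S)$ delivers (ii). The delicate case, and the one where condition (iii) actually enters, is when $\pi(X_{0})$ collapses to a singleton: picking $a\in[0,1]$ with $(a,a)\in B$, the constant sequence $\bar a\in X_{0}$ is $\sigma$-fixed, so $\pi(\bar a)$ is $S$-fixed, and then $Y=\pi(X_{0})\cup\pi(\sigma X_{0})=\{\pi(\bar a),S\pi(\bar a)\}=\{\pi(\bar a)\}$ is trivial, a contradiction. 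I expect this ``fixed-point'' dichotomy to be the main conceptual step.

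For the contrapositive of (i)$\Rightarrow$(iii), suppose $B\cap\{(x,x):0\le x\le 1\}=\emptyset$. Since $B$ is connected and lies in $[0,1]^{2}\setminus\{(x,x):0\le x\le 1\}$, which has exactly two components, $B$ sits in one of them; say $B\subset\{(x,y):y>x\}$. Compactness then yields a uniform $\delta>0$ with $y-x\ge\delta$ throughout $B$, and no bi-infinite sequence in $[0,1]$ can satisfy $x_{i+1}-x_{i}\ge\delta$ for every $i\in\mathbb{Z}$. Hence $X_{0}\cap\sigma(X_{0})=\emptyset$ and $X$ is the disjoint union of two nonempty closed sets swapped by $\sigma$. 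The indicator function of $X_{0}$ then descends to a continuous factor map from $(X,\sigma)$ onto the two-point system with the transposition, a nontrivial factor with zero topological entropy, violating (i).
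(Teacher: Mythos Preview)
Your proposal is correct and follows essentially the same route as the paper: the implication (ii)$\Rightarrow$(i) via Lindenstrauss--Weiss, the contrapositive of (i)$\Rightarrow$(iii) by showing $X_0\cap\sigma(X_0)=\emptyset$ and mapping onto the two-point swap, and (iii)$\Rightarrow$(ii) by conjugating $(X_0,\sigma^2)$ to $(B^{\mathbb Z},\sigma)$, invoking Li's theorem, and using the $\sigma$-fixed point coming from $(a,a)\in B$ to rule out the degenerate case. The only cosmetic difference is that the paper packages your ``fixed-point dichotomy'' as a standalone lemma (if $Y=Y_1\cup Y_2$ with $Y_1\cap Y_2\ne\emptyset$ and each $(Y_i,S)$ has the property, then so does $(Y,S)$), while you argue it inline; and for (i)$\Rightarrow$(iii) the paper uses a monotone-convergence argument where you use a uniform gap $\delta$---both are equally valid.
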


Note that when removing Condition (ii), Theorem \ref{main1} becomes a statement that does not involve the notion of mean dimension at all, and still establishes an ``if and only if'' condition which is new even for topological entropy of factors. Also note that Condition (iii) in Theorem \ref{main1} is rather elementary and explicit, and as a direct consequence, we see that such systems are indeed not difficult to find (and easy to construct).

\begin{remark}
The path-connectedness of $B$ required in Theorem \ref{main1} is a very natural condition which cannot be removed. See Section 3.
\end{remark}
\begin{remark}
The alphabet $[0,1]$ of the full shift $([0,1]^\mathbb{Z},\sigma)$ in the assumption of Theorem \ref{main1} cannot be replaced with a general compact metrizable space, even the one-dimensional torus. See Section 3 for details.
\end{remark}
\begin{remark}
The length $2$ of the block $B$ concerned in Theorem \ref{main1} cannot be simply replaced by an arbitrary positive integer $n$, even if $n=3$. Section 3 includes several examples in relation to possible generalizations.
\end{remark}

This paper is briefly organized as follows: In Section 2, we prove Theorem \ref{main1}; in particular, we develop a general lemma for that property. In Section 3, we present variations of our main theorem in company with basic examples, and moreover, we have a discussion on some open problems that are worth further exploring.

\section{Proof of the main theorem}
In this section we prove Theorem \ref{main1}. First note that Condition (ii) implies Condition (i), since Lindenstrauss and Weiss \cite[Section 4]{LW00} showed that if a dynamical system has finite topological entropy, then it must have zero mean dimension. Therefore it remains to prove the implications (i)$\implies$(iii) and (iii)$\implies$(ii).

Let $B$ and $X$ be the ones as assumed in the statement of Theorem \ref{main1}. Clearly, $X$ may\footnote{This follows directly from the characterization of the elements of $X$.} be written as a union of two closed subsets of $[0,1]^\mathbb{Z}$: $X=X_0\cup X_1$, where the closed subsets $X_0,X_1\subset[0,1]^\mathbb{Z}$ are defined by $$X_0=\{(x_i)_{i\in\mathbb{Z}}\in[0,1]^\mathbb{Z}:(x_i)_{2k\le i\le2k+1}\in B,\,\;\forall k\in\mathbb{Z}\},$$$$X_1=\{(x_i)_{i\in\mathbb{Z}}\in[0,1]^\mathbb{Z}:(x_i)_{2k-1\le i\le2k}\in B,\,\;\forall k\in\mathbb{Z}\}.$$

\subsection*{Proof of (i)$\implies$(iii)}
Suppose that Condition (iii) is not satisfied, i.e., $$B\cap\{(a,a):0\le a\le1\}=\emptyset.$$ We claim that $X_0\cap X_1=\emptyset$.

In fact, it follows from the path-connectedness of $B\subset[0,1]^2$ that $B$ is contained in either $\{(a_1,a_2):0\le a_1<a_2\le1\}$ or $\{(a_1,a_2):1\ge a_1>a_2\ge0\}$. Without loss of generality we may assume $$B\subset\{(a_1,a_2):0\le a_1<a_2\le1\}.$$ If there is a point $x=(x_i)_{i\in\mathbb{Z}}\in[0,1]^\mathbb{Z}$ such that $x\in X_0\cap X_1$, then from $x\in X_0$ we have $(x_{2k},x_{2k+1})\in B$ which implies that $$0\le x_{2k}<x_{2k+1}\le1,\quad\quad\;\forall\,k\in\mathbb{Z},$$ and from $x\in X_1$ we have $(x_{2k-1},x_{2k})\in B$ which implies that $$0\le x_{2k-1}<x_{2k}\le1,\quad\quad\;\forall\,k\in\mathbb{Z}.$$ Thus, we have $$0\le x_k<x_{k+1}\le1$$ for all $k\in\mathbb{Z}$, and hence, the sequence $(x_k)_{k=1}^{+\infty}$ converges to some $b\in[0,1]$ as $k\to+\infty$. By the compactness of the set $B\subset[0,1]^2$ we have $(b,b)\in B$. This implies that $$B\cap\{(a,a):0\le a\le1\}\ne\emptyset,$$ and therefore contradicts our assumption. So the claim is then proven.

Now by this claim, we are allowed to define a map $$\phi:X_0\cup X_1=X\to P=\{p_0,p_1\}$$ by $\phi(X_i)=p_i$ for $i\in\{0,1\}$, where $p_0$ and $p_1$ are two distinct points. We equip the set $P$ with a map $T:P\to P$ defined by $T(p_i)=p_{1-i}$ for $i\in\{0,1\}$. Note that under the discrete topology the finite set $P$ comes to be a compact metrizable space and $T:P\to P$ a homeomorphism, and in consequence $(P,T)$ becomes a dynamical system. By definition we have $\sigma(X_i)=X_{1-i}$ for $i\in\{0,1\}$. It follows that the map $\phi:X\to P$ is continuous, surjective, and equivariant (namely, it satisfies the commutativity: $\phi\circ\sigma=T\circ\phi$). This shows that $(P,T)$ is a factor of the dynamical system $(X,\sigma)$. Since the system $(P,T)$ is nontrivial and obviously has zero topological entropy, Condition (i) is not fulfilled.

\subsection*{Proof of (iii)$\implies$(ii)}
Now we suppose that Condition (iii) is satisfied. Then we must have $$X_0\cap X_1\ne\emptyset.$$ To see this fact, it suffices to observe that any $\sigma$-fixed point $(a)_{i\in\mathbb{Z}}\in[0,1]^\mathbb{Z}$ with the requirement $(a,a)\in B$ has to lie in the intersection of $X_0$ and $X_1$.

Next we turn to considering the two systems $(X_0,\sigma^2)$ and $(X_1,\sigma^2)$, where $\sigma^2$ is to denote the homeomorphism $\sigma\circ\sigma:X\to X$ (restricted to subsets automatically if necessary). Note that both of them are subsystems of the dynamical system $(X,\sigma^2)$. Here the point is that although $X_0$ and $X_1$ are not $\sigma$-invariant, it is easy to see that they are $\sigma^2$-invariant closed subsets of $X\subset[0,1]^\mathbb{Z}$.

It is clear that both the systems $(X_0,\sigma^2)$ and $(X_1,\sigma^2)$ are dynamically isomorphic (i.e., topologically conjugate) to another full shift $(B^\mathbb{Z},\sigma)$, the full shift over the alphabet $B$. Since $B$ is a path-connected compact metrizable space, by Li's theorem\footnote{We borrow Li's result as it applies to all the path-connected alphabets, which is then more flexible to use.} \cite[Theorem 7.5]{Li13} we obtain that $(B^\mathbb{Z},\sigma)$ satisfies the property that all the nontrivial factors have positive mean dimension (and thus infinite topological entropy), and hence, so do $(X_0,\sigma^2)$ and $(X_1,\sigma^2)$.

Recall that $X=X_0\cup X_1$.

We need develop a general lemma below, Lemma \ref{general}. Applying Lemma \ref{general} to these objects we can deduce immediately that all the nontrivial factors of $(X,\sigma^2)$ have positive mean dimension.

Furthermore, notice that if $(Z,R)$ is any nontrivial factor of $(X,\sigma)$, then $(Z,R^2)$ comes to be a nontrivial factor of $(X,\sigma^2)$. Thus, by Lindenstrauss and Weiss \cite[Proposition 2.7]{LW00} (which states that the mean dimension of $(Z,R)$ is equal to a half of that of $(Z,R^2)$) we conclude that all the nontrivial factors of $(X,\sigma)$ have positive mean dimension, as required by Condition (ii).

\begin{lemma}\label{general}
Let $(Y,S)$ be a dynamical system. Suppose that $(Y_1,S)$ and $(Y_2,S)$ are subsystems of $(Y,S)$ such that $Y=Y_1\cup Y_2$ and $Y_1\cap Y_2\ne\emptyset$. If both $(Y_1,S)$ and $(Y_2,S)$ satisfy the property that all the nontrivial factors have positive mean dimension, then so does $(Y,S)$.
\end{lemma}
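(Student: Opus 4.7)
The plan is to take an arbitrary factor map $\pi:(Y,S)\to(Z,R)$ onto a nontrivial system and reduce matters to the assumed property on one of the $(Y_i,S)$ by restricting $\pi$. First I would set $Z_i:=\pi(Y_i)$ for $i\in\{1,2\}$. Compactness of $Y_i$ makes $Z_i$ closed in $Z$, and the intertwining relation $\pi\circ S=R\circ\pi$ together with $S$-invariance of $Y_i$ makes $Z_i$ an $R$-invariant subset. So $(Z_i,R)$ is a subsystem of $(Z,R)$ and, through $\pi|_{Y_i}$, a factor of $(Y_i,S)$. Surjectivity of $\pi$ and $Y=Y_1\cup Y_2$ give $Z=Z_1\cup Z_2$, while $Y_1\cap Y_2\ne\emptyset$ gives $\pi(Y_1\cap Y_2)\subset Z_1\cap Z_2$, so $Z_1\cap Z_2\ne\emptyset$.

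The key step, which I would then carry out, is to observe that at least one of $(Z_1,R)$, $(Z_2,R)$ is itself a nontrivial system. For otherwise both $Z_1$ and $Z_2$ would be singletons; the point shared by $Z_1\cap Z_2$ would then force $Z_1=Z_2$, so $Z=Z_1\cup Z_2$ would also reduce to a single point, contradicting nontriviality of $(Z,R)$. Without loss of generality, then, $(Z_1,R)$ is a nontrivial factor of $(Y_1,S)$, so by the assumed property of $(Y_1,S)$ we have $\mathrm{mdim}(Z_1,R)>0$.

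To conclude, I would invoke the monotonicity of mean dimension under closed invariant subsystems (if $(W,R)$ is a subsystem of $(Z,R)$ then $\mathrm{mdim}(W,R)\le\mathrm{mdim}(Z,R)$, a direct consequence of the definition via open covers) to get $\mathrm{mdim}(Z,R)\ge\mathrm{mdim}(Z_1,R)>0$, exactly what is required. The only substantive point is the singleton-dichotomy step above, and this is precisely where the hypothesis $Y_1\cap Y_2\ne\emptyset$ is used: without a common point one could, in principle, map $Y$ onto a nontrivial finite orbit (as in the proof of (i)$\Rightarrow$(iii)) on which the mean dimension vanishes, so the intersection hypothesis is exactly what prevents that pathology.
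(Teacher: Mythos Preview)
Your argument is correct and follows essentially the same route as the paper: define $Z_i=\pi(Y_i)$, use $Z_1\cap Z_2\ne\emptyset$ to force at least one $Z_i$ to be nontrivial, and then pass the positive mean dimension of that $Z_i$ up to $Z$. The only difference is cosmetic---you spell out the closedness/invariance of $Z_i$ and the monotonicity of mean dimension under subsystems, which the paper leaves implicit.
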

\begin{proof}
Take any nontrivial factor $(Z,R)$ of $(Y,S)$ with a factor map $\pi:(Y,S)\to(Z,R)$. Put $Z_1=\pi(Y_1)$ and $Z_2=\pi(Y_2)$. Clearly, $(Z_1,R)$ and $(Z_2,R)$ become factors of $(Y_1,S)$ and $(Y_2,S)$, respectively. Also observe that $(Z_1,R)$ and $(Z_2,R)$ are subsystems of $(Z,R)$, satisfying that $Z=Z_1\cup Z_2$ and $Z_1\cap Z_2\ne\emptyset$. This implies that, if both $Z_1\subset Z$ and $Z_2\subset Z$ contain only one point, then we must have $Z_1=Z_2\subset Z$, and therefore, $Z=Z_1=Z_2$ would have to consist of one point only. It follows that at least one of $(Z_1,R)$ and $(Z_2,R)$ should be nontrivial, and in consequence, this one will have positive mean dimension. So we see that $(Z,R)$ has positive mean dimension, as desired.
\end{proof}

\subsection*{Some remarks}
We have proved Theorem \ref{main1}. Now we wish to give some remarks in relation to Lemma \ref{general}. First note that (the proof directly shows that) the property concerned in the statement of Lemma \ref{general} may be replaced straightforwardly with a property such as that all the nontrivial factors have infinite (or, positive) topological entropy. Besides, it is valid for all the actions of sofic groups. Although this lemma is simple, it is useful in the sense that it enables us to produce new examples with the same property from the old ones. More precisely, such a new system can be constructed by taking (a factor of) a finite union of (factors of) old systems but with, for example, a fixed-point in common, and so on.

Next we point out that in some very special case we actually do not need Lemma \ref{general}, and so the proof could be simplified immediately. To explain it in detail, assume that $B=[c_1,c_1^\prime]\times[c_2,c_2^\prime]\subset[0,1]^2$ is a rectangle that intersects the diagonal of $[0,1]^2$. In this case we have $$X_0=\{(x_i)_{i\in\mathbb{Z}}\in[0,1]^\mathbb{Z}:\,c_1\le x_{2k}\le c_1^\prime,\,c_2\le x_{2k+1}\le c_2^\prime,\,\;\forall k\in\mathbb{Z}\},$$$$X_1=\{(x_i)_{i\in\mathbb{Z}}\in[0,1]^\mathbb{Z}:\,c_1\le x_{2k-1}\le c_1^\prime,\,c_2\le x_{2k}\le c_2^\prime,\,\;\forall k\in\mathbb{Z}\},$$ and hence $(X,\sigma^2)=(X_0\cup X_1,\sigma^2)$ is isomorphic to the full shift on the product space $A^\mathbb{Z}$, for some path-connected alphabet $A\subset[0,1]^2$. Thus, Li's theorem \cite[Theorem 7.5]{Li13} is adequate for the above proof replacing Lemma \ref{general}. The point here is that $x_i$ and $x_{i+1}$ with $(x_i,x_{i+1})\in B$, for each $i\in\mathbb{Z}$, are independent of each other, and in consequence the system $(X,\sigma^2)$, as mentioned above, can be clearly seen and easily described. However, in general the same derivation does not work properly for an arbitrary path-connected closed subset $B\subset[0,1]^2$. This explains why we need this lemma essentially in its general form in order to adapt our proof to all the possible cases.

\section{Open questions}
\subsection{Some examples}
In this subsection we give several examples which exclude some seemingly plausible directions of generalizations of Theorem \ref{main1}. First note that if the block concerned in a subshift is not path-connected, then the case turns out to be less interesting even for the full shifts. For instance, the full shift over the alphabet $[0,1]\cup[2,3]$ admits a factor which is zero-dimensional and which has topological entropy $\log2$.

\subsubsection*{Example regarding general alphabets}
The alphabet $[0,1]$ involved in the assumption of Theorem \ref{main1} cannot be replaced with a general compact metrizable space. For example, consider the one-dimensional torus $\mathbb{T}=[0,1]/\sim$, where the equivalence relation $\sim$ is given by $t\sim t^\prime\iff t-t^\prime\in\mathbb{Z}$ for any $0\le t,t^\prime\le1$ (i.e. with $0$ and $1$ being identified). Let $B=\{(s,s+0.1)\mod1:0\le s\le1\}$. Clearly, $B$ is a path-connected closed subset of $\mathbb{T}^2$, and moreover, $B\cap\{(t,t):t\in\mathbb{T}\}=\emptyset$. Let $(X,\sigma)$ be the subshift of block type of $(\mathbb{T}^\mathbb{Z},\sigma)$ associated with $(2,B)$. As observed, the point $(0.1i\mod1)_{i\in\mathbb{Z}}\in\mathbb{T}^\mathbb{Z}$ belongs to both $X_0$ and $X_1$, and hence, we have $X_0\cap X_1\ne\emptyset$ (where $X_0$ and $X_1$ are similar to the ones as defined in Section 2). Using the same method as exhibited in the proof of Theorem \ref{main1} in Section 2, we can see eventually that every nontrivial factor of $(X,\sigma)$ has positive mean dimension. However, $B$ and the diagonal of $\mathbb{T}^2$ need not intersect. This example shows that it is possible that Condition (ii) does not imply Condition (iii) if\footnote{But notice that the converse (i.e. an analogue of the implication (iii)$\implies$(ii)) is always true (namely, its proof applies to any alphabet and any path-connected block of any length).} we replace $[0,1]$ by a general alphabet.

\subsubsection*{Examples regarding path-connected blocks of arbitrary length}
We include here, without changing notations\footnote{This means that in general (an analogue of) the notation will adapt automatically to the case if necessary (but we shall not mention it). For example, we will similarly have three closed subsets $X_0,X_1,X_2\subset[0,1]^\mathbb{Z}$ generated from the block $B$ with $X=X_0\cup X_1\cup X_2$.}, two examples of subshifts of block type $(X,\sigma)$ of $([0,1]^\mathbb{Z},\sigma)$ associated with $(3,B)$. As follows are some typical diagonal-like closed subsets of the cube $[0,1]^3$: $D_1=\{(c_1,c_2,c_3)\in[0,1]^3:c_2=c_3\}$, $D_2=\{(c_1,c_2,c_3)\in[0,1]^3:c_3=c_1\}$, $D_3=\{(c_1,c_2,c_3)\in[0,1]^3:c_1=c_2\}$, $D=D_1\cap D_2\cap D_3=\{(c,c,c):0\le c\le1\}$, which are related to the block $B$ of length $3$ (contained in $[0,1]^3$), as presented in the\footnote{Condition (iii) focuses mainly on if the block $B$ is disjoint with some analogue of diagonals. Observe that through a similar derivation we can obtain that if Condition (ii) is satisfied, then at least one of the following conditions are fulfilled: $B\cap D_1\ne\emptyset$, $B\cap D_2\ne\emptyset$, $B\cap D_3\ne\emptyset$. So the first example aims to show that the conclusion cannot be strengthened by Condition (iii). Conversely, as already noted, by means of the same method we see that Condition (iii) implies Condition (ii). So the second example aims to show that the assumption cannot be relaxed as stated.} two examples.

Let $B$ be the boundary of the triangle with three vertices $(1,0,0)$, $(0,1,0)$, $(0,0,1)$. Clearly, $B$ is a path-connected and closed subset of $[0,1]^3$, and in addition, $B\cap D=\emptyset$. Note that in this case\footnote{It is also possible to find some $B$ such that $X_0\cap X_1\ne\emptyset$, $X_1\cap X_2\ne\emptyset$, $X_2\cap X_0\ne\emptyset$ (which meets the requirements stated in Lemma \ref{general} as well), but additionally $X_0\cap X_1\cap X_2=\emptyset$ (e.g., this can be done by considering the set $B=\{(l,1-l,0)\in[0,1]^3:0\le l\le1\}$, or, the boundary of the rectangle with vertices $(0,0.5,0)$, $(1,0.5,0)$, $(1,0.5,1)$, $(0,0.5,1)$).} we have $X_0\cap X_1\cap X_2\ne\emptyset$. This will imply\footnote{Roughly speaking, here we still need to follow the proof of Theorem \ref{main1} (and in particular, to employ \cite[Theorem 7.5]{Li13} and Lemma \ref{general} if need be).} that all the nontrivial factors of $(X,\sigma)$ have positive mean dimension. However, the block $B\subset[0,1]^3$ and the set $D\subset[0,1]^3$ are disjoint. This example is to demonstrate that in the assumption of Theorem \ref{main1}, if the length $2$ of a block $B$ is directly replaced by a larger positive integer $3$ while the diagonal of $[0,1]^2$ (concerned in Condition (iii)) is simply replaced by $D$ (the diagonal of $[0,1]^3$), then Condition (iii) may fail (with Condition (ii) being satisfied).

As below we will see\footnote{Furthermore, to clarify something else (e.g., aspects involving diagonals of faces of the cube $[0,1]^3$, or even with the edges involved), one can consider the boundary of the triangle with three vertices $(b,a_1,a_1)$, $(a_2,b,a_2)$, $(a_3,a_3,b)$, provided $b,a_1,a_2,a_3\in[0,1]$ are pairwise distinct.} that conversely, a weaker version of Condition (iii) does not imply Condition (ii). Let $B=\{(l,0.5(1-l),0)\in[0,1]^3:0\le l\le1\}$. It is clear that $B$ is a path-connected and closed subset of $[0,1]^3$, and is such that $B\cap D_i\ne\emptyset$ for $i\in\{1,2,3\}$. Moreover, a direct observation shows that the sets $X_0$, $X_1$, $X_2$ are pairwise disjoint. It follows that there is a nontrivial factor of $(X,\sigma)$, similar to the one given in the proof of Theorem \ref{main1}, which has zero topological entropy. Therefore Condition (ii) is not satisfied. This example is to demonstrate that Condition (ii) may still fail even if all the following conditions are fulfilled: $B\cap D_1\ne\emptyset$, $B\cap D_2\ne\emptyset$, $B\cap D_3\ne\emptyset$.

The variety of examples of this kind is endless. To summarize, these examples show that we cannot expect Theorem \ref{main1} to be generalized essentially with a simple modification only. There is still some possibility that Theorem \ref{main1} could be extended to some level with a new perspective or mixed conditions. The authors leave this paper at its notationally readable and elementary level.

\subsection{Variants of the main result}
If we do not seek a characterization as explicit as the type of Condition (iii) in the statement of Theorem \ref{main1}, then we have the following theorem for all the subshifts of block type.
\begin{theorem}\label{main2}
Let $A$ be a compact metrizable space and $n$ a positive integer. Let $B$ be a path-connected closed subset of $A^n$. Let $(X,\sigma)$ be the subshift of block type of $(A^\mathbb{Z},\sigma)$ associated with $(n,B)$. Then every nontrivial factor of $(X,\sigma)$ has infinite topological entropy if and only if every nontrivial factor of $(X,\sigma)$ has positive mean dimension.
\end{theorem}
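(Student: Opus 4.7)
The easy direction, that every nontrivial factor having positive mean dimension implies every nontrivial factor having infinite topological entropy, is immediate from the Lindenstrauss--Weiss result that finite topological entropy forces zero mean dimension \cite{LW00}. So the work lies in the reverse implication, and my plan is to mirror the architecture of the proof of Theorem \ref{main1}, but to replace the explicit ``diagonal'' Condition (iii) by a combinatorial connectedness criterion on the canonical pieces of $X$.

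Write $X = X_0 \cup X_1 \cup \cdots \cup X_{n-1}$, where $X_i$ consists of those sequences in $A^\mathbb{Z}$ whose consecutive $B$-blocks begin at the positions $\{i + kn : k \in \mathbb{Z}\}$. The shift $\sigma$ cyclically permutes the $X_i$'s, each $X_i$ is closed and $\sigma^n$-invariant, and each $(X_i, \sigma^n)$ is topologically conjugate to the full shift $(B^\mathbb{Z}, \sigma)$. Since $B$ is path-connected, Li's theorem \cite[Theorem 7.5]{Li13} says each $(X_i, \sigma^n)$ has the property that every nontrivial factor has positive mean dimension. Now introduce the \emph{overlap graph} $G$ on the vertex set $\{0, 1, \ldots, n-1\}$, with an edge $\{i, j\}$ precisely when $X_i \cap X_j \ne \emptyset$. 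Because $\sigma$ is a homeomorphism that permutes the $X_i$'s cyclically, $G$ is invariant under this cyclic action.

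Assume now that every nontrivial factor of $(X, \sigma)$ has infinite topological entropy; I claim $G$ must be connected. Indeed, if $G$ had $m \ge 2$ connected components $C_0, \ldots, C_{m-1}$, the sets $Y_l := \bigcup_{i \in C_l} X_i$ would be closed and pairwise disjoint (since vertices in different components share no edge, so $X_i \cap X_j = \emptyset$ whenever $i \in C_l$, $j \in C_{l'}$, $l \ne l'$) and cyclically permuted by $\sigma$; then the assignment sending each $Y_l$ to $l$ would be a continuous equivariant surjection from $(X, \sigma)$ onto a nontrivial finite cyclic system of zero topological entropy, contradicting the assumption. Hence $G$ is connected, and one can iterate Lemma \ref{general} along any spanning tree of $G$ (ordering the vertices so that each new one is adjacent to a previously added one, so that its corresponding $X_i$ intersects the already-constructed union) to conclude that $(X, \sigma^n)$ has the property that every nontrivial factor has positive mean dimension. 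Finally, any nontrivial factor $(Z, R)$ of $(X, \sigma)$ yields a nontrivial factor $(Z, R^n)$ of $(X, \sigma^n)$, whose positive mean dimension transfers to $(Z, R)$ via $\mathrm{mdim}(Z, R^n) = n \cdot \mathrm{mdim}(Z, R)$, the $n$-fold analogue of \cite[Proposition 2.7]{LW00}.

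The key insight---and essentially the only real conceptual step---is identifying the right combinatorial invariant to play the role of Condition (iii): the overlap graph $G$ does exactly this, because its disconnectedness is precisely what allows one to peel off a nontrivial zero-entropy finite factor, while its connectedness is exactly what one needs to glue the pieces $(X_i, \sigma^n)$ together via Lemma \ref{general}. The iterative lemma application along a spanning tree and the $\sigma$-to-$\sigma^n$ transition via mean-dimension scaling are then routine.
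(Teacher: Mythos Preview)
Your proof is correct, but it takes a more elaborate route than what the paper intends. The paper does not spell out a proof of Theorem~\ref{main2}, but signals (in the accompanying footnote) that the argument is ``much easier and more direct'' than that of Theorem~\ref{main1}, and it remarks that Theorem~\ref{main3} implies Theorem~\ref{main2}. The intended reasoning is simply this: for any factor $(Z,R)$ of $(X,\sigma)$ with factor map $\pi$, put $Z_i=\pi(X_i)$; each $(Z_i,R^n)$ is a factor of $(X_i,\sigma^n)\cong(B^{\mathbb Z},\sigma)$, so by Li's theorem either $Z_i$ is a singleton or $(Z_i,R^n)$ has positive mean dimension. If some $Z_i$ is not a singleton then $(Z,R)$ already has positive mean dimension (by monotonicity under subsystems and the $n$-fold scaling); if every $Z_i$ is a singleton then $Z$ is finite and $(Z,R)$ has zero topological entropy. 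That dichotomy is exactly Theorem~\ref{main3}, and Theorem~\ref{main2} follows at once.

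Both arguments share the decomposition $X=\bigcup_i X_i$, Li's theorem on each piece, and the passage from $\sigma^n$ back to $\sigma$. Where you diverge is in building the overlap graph $G$, proving it connected by exhibiting a finite zero-entropy factor in the disconnected case, and then gluing the $X_i$ along a spanning tree through iterated applications of Lemma~\ref{general}. None of this machinery is needed here: the infinite-entropy hypothesis disposes of the ``all $Z_i$ are singletons'' case directly, with no appeal to nonempty intersections among the $X_i$ and hence no need for Lemma~\ref{general} at all. What your argument \emph{does} buy is a genuine third equivalent condition---connectedness of $G$---an intrinsic structural criterion in the spirit of Condition~(iii) of Theorem~\ref{main1}, which is exactly the sort of statement the paper says it is not pursuing for general $n$ and $A$. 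The paper's route, by contrast, is shorter and simultaneously yields the stronger per-factor dichotomy of Theorem~\ref{main3}, which your overlap-graph approach does not directly give.
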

We do not regard Theorem \ref{main2} as a main result of this paper (although it holds true for all the subshifts of block type), because its statement does not contain a ``block-only'' condition (which is of a simple form) as clear as the one given in Theorem \ref{main1}. We do not give the proof of Theorem \ref{main2} as it is somewhat similar\footnote{Note that this is actually much easier and more direct to show.} in part to the proof of Theorem \ref{main1}.

The next result (Theorem \ref{main3}) pursues a similar dichotomy for\footnote{The alphabet concerned in a full shift is not so important to Theorems \ref{main2} and \ref{main3}.} each factor of any subshift of block type. The proof of Theorem \ref{main3} is omitted as well. Note that Theorem \ref{main3} implies Theorem \ref{main2}, moreover, Theorem \ref{main3} can be regarded as a statement of topological entropy, too.
\begin{theorem}\label{main3}
Let $A$ be a compact metrizable space and $n$ a positive integer. Let $B$ be a path-connected closed subset of $A^n$. Let $(X,\sigma)$ be the subshift of block type of $(A^\mathbb{Z},\sigma)$ associated with $(n,B)$. Then each factor of $(X,\sigma)$ has either zero topological entropy or positive mean dimension \textup(and hence infinite topological entropy\textup).
\end{theorem}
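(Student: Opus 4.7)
The plan is to establish a dichotomy for an arbitrary factor $(Z,R)$ of $(X,\sigma)$: either $Z$ is a finite set (so $(Z,R)$ has zero topological entropy), or $(Z,R)$ has positive mean dimension. I would begin by writing $X=X_0\cup X_1\cup\cdots\cup X_{n-1}$, where $X_j=\sigma^j(X_0)$ is the set of sequences whose $B$-blocks are aligned starting at position $-j$. Although the $X_j$'s are not $\sigma$-invariant (indeed $\sigma$ cyclically permutes them), each is a closed $\sigma^n$-invariant subset of $X$, and the natural block-collapsing map $(x_i)_{i\in\mathbb{Z}}\mapsto((x_{kn-j},\ldots,x_{kn-j+n-1}))_{k\in\mathbb{Z}}$ makes $(X_j,\sigma^n)$ topologically conjugate to the full shift $(B^\mathbb{Z},\sigma)$.

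Now let $\pi\colon(X,\sigma)\to(Z,R)$ be any factor map and set $Z_j=\pi(X_j)$ for $0\le j\le n-1$. Since $\pi$ intertwines $\sigma^n$ with $R^n$ and $X_j$ is $\sigma^n$-invariant, $Z_j$ is a closed $R^n$-invariant subset of $Z$, and $\pi|_{X_j}$ realizes $(Z_j,R^n)$ as a factor of $(X_j,\sigma^n)\cong(B^\mathbb{Z},\sigma)$. Applying Li's theorem \cite[Theorem 7.5]{Li13} to each piece, for every $j$ either $Z_j$ consists of a single point or $(Z_j,R^n)$ has positive mean dimension.

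If every $Z_j$ is a singleton, then $Z=\bigcup_{j=0}^{n-1}Z_j$ has at most $n$ points, and $(Z,R)$ trivially has zero topological entropy. Otherwise some $(Z_{j_0},R^n)$ has positive mean dimension; by the standard monotonicity of mean dimension under passage to closed invariant subsystems, $(Z,R^n)$ has positive mean dimension; and the identity $\mathrm{mdim}(Z,R^n)=n\cdot\mathrm{mdim}(Z,R)$ from \cite[Proposition 2.7]{LW00} then yields positive mean dimension for $(Z,R)$, which by \cite[Section 4]{LW00} also forces infinite topological entropy.

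The main obstacle, in my view, is purely careful bookkeeping: verifying that each $X_j$ is $\sigma^n$-invariant, checking that the block-collapsing map is a genuine conjugacy for every $j$, and invoking the monotonicity of mean dimension under closed invariant subsystems to lift the positive mean dimension of a single piece $Z_{j_0}$ to all of $Z$. No new dynamical input beyond what already appears in the proof of Theorem \ref{main1} is needed; the difference is that, without the intersection hypothesis of Condition (iii), Lemma \ref{general} is unavailable, and the cost one pays for this is precisely the extra alternative in the dichotomy, namely that $(Z,R)$ may collapse onto a finite orbit.
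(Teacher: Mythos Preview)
The paper omits the proof of Theorem \ref{main3}, noting only that it is similar to (and in fact easier than) that of Theorem \ref{main1}. Your argument is correct and is exactly the natural adaptation: the same decomposition into $\sigma^n$-invariant pieces conjugate to $(B^{\mathbb{Z}},\sigma)$, Li's theorem applied piecewise, and the passage from $R^n$ to $R$ via \cite[Proposition 2.7]{LW00}, with Lemma \ref{general} replaced by the finite-orbit alternative just as you observe.
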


\subsection{Problems}
The paper ends with a conjecture which is closely related to the current topic. As mentioned previously, a dynamical system having positive mean dimension must have infinite topological entropy \cite[Section 4]{LW00}, but not vice versa. However, the following conjecture seems reasonable. Before stating it, we should indicate that it is shaded by the Gutman--Lindenstrauss--Tsukamoto conjecture\footnote{Gutman--Lindenstrauss--Tsukamoto \cite{GLT16} conjectured that a system has zero mean dimension if and only if it is an inverse limit of finite entropy systems.} for zero mean dimension \cite[Conjecture 1.1]{GLT16}, a more difficult problem about the structure of zero mean dimensional systems.

\begin{conjecture}\label{conjecture}
Let $(X,T)$ be a dynamical system. Then the following conditions are equivalent:
\begin{itemize}
\item every nontrivial factor of $(X,T)$ has infinite topological entropy,
\item every nontrivial factor of $(X,T)$ has positive mean dimension.
\end{itemize}
\end{conjecture}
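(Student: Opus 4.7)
The plan is to treat the two directions of the equivalence separately. The easy direction---from ``every nontrivial factor has positive mean dimension'' to ``every nontrivial factor has infinite topological entropy''---is immediate from \cite[Section 4]{LW00}: finite topological entropy forces zero mean dimension, so a hypothetical nontrivial finite entropy factor would be a nontrivial zero mean dimensional factor, contradicting the mean-dimension assumption. No further work is needed for this half.

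For the converse, I would argue contrapositively: assume that $(X,T)$ admits a nontrivial factor $(Y,S)$ with zero mean dimension, and try to produce a nontrivial factor of $(X,T)$ with finite topological entropy. Since a factor of a factor of $(X,T)$ is again a factor of $(X,T)$, the task reduces to the purely internal statement that every nontrivial zero mean dimensional system $(Y,S)$ admits a nontrivial factor with finite topological entropy.

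This reduced assertion is precisely a ``one-factor'' weakening of the Gutman--Lindenstrauss--Tsukamoto conjecture \cite[Conjecture 1.1]{GLT16}. Granting the ``zero mean dimension implies inverse limit of finite entropy factors'' direction of GLT, the argument finishes quickly: write $(Y,S)$ as an inverse limit of finite entropy factors $(Y_n, S_n)$; nontriviality of $Y$ forces nontriviality of at least one $(Y_n, S_n)$, for otherwise every projection $Y \to Y_n$ would collapse to a single point and $Y$ itself would be a singleton; and any such $(Y_n, S_n)$ is the desired nontrivial finite entropy factor of $(X,T)$. In this sense Conjecture \ref{conjecture} is a formal consequence of GLT.

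The main obstacle is therefore that GLT is itself a deep open problem and cannot be invoked unconditionally. A potentially more tractable route would be to avoid constructing the entire inverse limit and instead produce a single nontrivial finite entropy factor directly: given a finite open cover $\mathcal{U}$ of $Y$ whose members separate two distinct points, exploit zero mean dimension to refine $\mathcal{U}$ into covers whose $(n,S)$-complexities grow subexponentially in $n$, and then extract a closed $S$-invariant equivalence relation on $Y$ whose quotient is nontrivial and of finite topological entropy. Such a ``one cover at a time'' argument would be strictly weaker than GLT and so might be within reach, but implementing it rigorously appears to require genuinely new techniques going beyond those used in Section 2.
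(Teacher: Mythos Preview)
Your proposal is not a proof, and neither is there one in the paper: the statement is explicitly labelled a \emph{conjecture} (Conjecture~\ref{conjecture}) and is left open. What you have written is an accurate diagnosis of the situation rather than a proof attempt in the usual sense.

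Concretely: your easy direction is correct and is exactly the observation the paper records (positive mean dimension forces infinite entropy by \cite[Section 4]{LW00}). Your reduction of the hard direction to the assertion ``every nontrivial zero mean dimensional system has a nontrivial finite entropy factor'' is also correct, and your remark that this would follow from the Gutman--Lindenstrauss--Tsukamoto conjecture \cite[Conjecture 1.1]{GLT16} is precisely what the paper means when it says Conjecture~\ref{conjecture} is ``shaded by'' GLT. The paper goes slightly further than you by listing the classes where Conjecture~\ref{conjecture} is already known (minimal systems, finite-dimensional systems, and the subshifts of block type handled in Section~2), and by noting that even for systems with the marker property---where GLT itself is known---Conjecture~\ref{conjecture} remains open because the marker property need not pass to factors.

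The genuine gap, then, is simply that the hard direction is open: your ``one cover at a time'' sketch is a reasonable heuristic but, as you yourself acknowledge, does not constitute a proof and would require new ideas. You should present this as a discussion of the conjecture's status (which is what the paper does), not as a proof.
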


Note that Conjecture \ref{conjecture} has now been confirmed true for (at least) three classes of dynamical systems as follows: (1) all the minimal systems (having no periodic points, unless the state space is finite) \cite{Lin99,GLT16}, (2) finite-dimensional systems (having mean dimension zero) \cite{Lin95}, and, (3) subshifts of block type (with periodic points involved), verified in the present paper.

Lastly, we would like to remark that Conjecture \ref{conjecture} has not yet been (although the Gutman--Lindenstrauss--Tsukamoto zero mean dimension conjecture \cite[Conjecture 1.1]{GLT16} has been) confirmed true for systems with the marker property\footnote{In particular, any system with an infinite minimal system as a factor has the marker property. The \textit{marker property} is intimately connected with the \textit{small boundary property}. For a conjecture about the relation between the marker property and the small boundary property we refer to \cite[Conjectures 7.4 and 7.5]{TTY22}.}. The point is that any factor of a minimal system is also minimal, while the marker property does not apply (i.e., a factor of a system satisfying the marker property is not necessarily again to possess the marker property), nor do systems admitting infinite minimal factors.

\end{document}